\newtheorem{lemma}{Lemma}
\newtheorem{theorem}{Theorem}
\newtheorem{corollary}{Corollary}
\newcommand{\F}{{\mathbb{F}}}
\begin{document}

\title{On components of a Kerdock code and
 the dual of the BCH code $C_{1,3}$}
\author{{I. Yu. Mogilnykh,  F. I. Solov'eva}}%

\address{Ivan Yu. Mogilnykh
\newline\hphantom{iii} Sobolev Institute of Mathematics,
\newline\hphantom{iii} pr. ac. Koptyuga 4,
\newline\hphantom{iii} 630090, Novosibirsk, Russia}%
\email{ivmog84@gmail.com}%

\address{Faina I. Solov'eva
\newline\hphantom{iii} Sobolev Institute of Mathematics,
\newline\hphantom{iii} pr. ac. Koptyuga 4,
\newline\hphantom{iii} 630090, Novosibirsk, Russia}%
\email{sol@math.nsc.ru}%

\thanks{\copyright \ 2018  I. Yu. Mogilnykh,  F. I. Solov'eva}
\thanks{\rm This work was funded by the Russian Science Foundation under grant
18-11-00136.
}



\maketitle

\begin{quote}
{\small \noindent{\sc Abstract. } In the paper we investigate the
 structure of $i$-components of two classes of codes: Kerdock
 codes and the duals of the primitive cyclic BCH code with designed
distance 5 of length $n=2^m-1$, for odd $m$. We prove that for any
admissible length a punctured Kerdock code consists of two
$i$-components and the dual of BCH code is $i$-component for any
$i$. We give an alternative proof for the fact that the
restriction of the Hamming scheme to a doubly shortened Kerdock
code is an association scheme \cite{vanCaen}.

\medskip

\noindent{\bf Keywords:}  Kerdock code, shortened Kerdock code,
punctured Kerdock code, Reed-Muller code, uniformly packed code,
dual code, association scheme, t-design}
\end{quote}

\section{Introduction}

Let $\F^n$ be the vector space of dimension $n$ over the Galois
field $GF(2)$.
Denote by ${\bf 0}^n$ and ${\bf 1}^n$  the all-zero and all-one
vectors in $\F^n$ respectively. The Hamming distance $d(x,y)$
between vectors $x,y \in \F^n$ is the number of positions at which
the corresponding symbols in $x$ and $y$ are different. {\it The
Hamming weight} $w(x)$ of a vector $x$ is $d(x,{\bf 0}^n)$. A {\it
code} of length $n$ is a subset of $\F^n$. Vectors of a code are
called {\it codewords}. The {\it size} of a code is the number of
its codewords. The {\it code distance} (or {\it minimum distance})
of a code is the minimum value of the Hamming distance between two
different codewords from the code. The {\it kernel} $Ker(C)$ of a
code $C$ is $\{x:x+C=C\}$. Obviously, the code $C$ is a union of
cosets of $Ker(C)$. The code obtained from a code $C$ by deleting
one coordinate position is called  the {\it punctured code}. Such
code we denote by $C^*$ and doubly punctured code by $C^{**}$. The
{\it shortened code} of $C$ is obtained by selecting the subcode
of $C$ having zeros at a certain position and deleting this
position. We denote such code by $C^{\prime}$. Doubly shortened
code we denote by $C^{\prime\prime}$. For a code $C$ denote by
$I(C)$ the set of distances between its codewords:
$I(C)=\{d(x,y):x,y \in C\}$ and by $C_i$ denote the set of its
codewords of weight $i$: $C_i=\{x\in C: w(x)=i\}$. All other
necessary definitions and notions can be found in \cite{MWSl}.

Given a code $C$ with minimum distance $d$ consider the graph
$G_i(C)$ with the set of codewords as the set of vertices and the
set of edges $\{(x,y):d(x,y)=d, x_i\neq y_i\}$.  A connected
component of the graph $G_i(C)$ is called the {\it $i$-component}
of the code. If the minimum distance $d$ is greater then $2$ then
changing the value in $i$th coordinate position in all vectors of
any $i$-component by the opposite one in the code leads to a code
with the same parameters: length, size and code distance.
Therefore, we can obtain an exponential number (as a function of
the number of $i$-components in the code) of different codes with
the same parameters. Such approach was earlier successfully
developed for the class of perfect codes. The method of
$i$-components allowed to construct a large class of pairwise
nonequivalent perfect codes and was used to study various code
properties, see the survey \cite{Sol}.

  Punctured Preparata codes, perfect codes with code distance 3 and the primitive cyclic BCH code $C_{1,3}$ with designed distance 5
  of length $2^m-1$, odd $m$ are known to be uniformly packed
 \cite{SZZ1971}, \cite{BZZ}. Therefore, the fixed weight codewords of the extensions of these codes form 3-designs,
which was proved by Semakov, Zinoviev and Zaitsev in
\cite{SZZ1971}. An analogous property holds for duals of codes
from these classes. Let $C^{\perp}$ be a formally dual code to a
code $C$ with code distance $d$, i.e. their weight distributions
are related by McWilliams identities \cite{MWSl}. In Theorem 9,
Ch. 9, \cite{MWSl} it was shown that the set of codewords of any
fixed weight in $C^{\perp}$ is $(d-\bar{s})$-design, where
$\bar{s}$ denotes the number of different nontrivial (not equal to
$0$ and $n$) weights of the codewords of $C^{\perp}$. It is
well-known
 that a Kerdock code and a Preparata code of the same length are
 formally dual. Therefore, the fixed weight codewords of a Kerdock code are  $3$-designs
 and the code $C^{\perp}_{1,3}$ orthogonal to $C_{1,3}$ of length
 $2^m-1$, $m$-odd,  are $2$-designs
respectively.

The aforementioned codes are related to association schemes. Let
$X$ be a set, and there are $n+1$ relations $R_i$, $i\in I$ that
partition $X\times X$. The pair $(X,\{R_i\}_{i \in I})$ is called
an {\it association scheme}, if there are $\delta_{i,j}^k(X)$,
such that
\begin{itemize}
    \item The relation $\{(x,x):x \in X\}$ is $R_j$ for some $j \in I$.
    \item For any $i$, the relation $R_{i}^{-1}=\{(y,x):(x,y) \in
    R_i\}$ is $R_j$ for some $j\in I$.
    \item For any $i,j,k\in I$ and $x,y$ in $X$, $(x,y) \in R_i$ the following holds: $$\delta_{i,j}^k(X)=|\{z:z \in X, (x,z)\in R_j, (y,z)\in R_k
    \}|.$$
\end{itemize}
The numbers $\delta_{i,j}^k(X)$, $i,j,k\in I$ are called {\it
intersection numbers} of the association scheme.

Let $C$ be a binary code. Consider the partition of the cartesian
square $C\times C$ into distance relations, i.e. two pairs of
codewords are in the same relation if and only if the Hamming
distances between the pairs coincide. Such partition is called
{\it the restriction} of the Hamming scheme to the code $C$, see
\cite{Del}. There are several cases where the restriction gives an
association scheme. In this case, the code with this property is
called distance-regular, see \cite{SolTok}. Using linear
programming bound, Delsarte in \cite{Del} showed that the
restriction of the Hamming scheme to a shortened Kerdock code is
an association scheme. An analogous fact for Kerdock codes was
proved in \cite{SolTok} by finding the intersection numbers of the
restricted scheme directly. In work \cite{vanCaen}, see also
\cite{Abdukhalikov},  it is shown that the restriction to a doubly
shortened Kerdock code is also an association scheme. The latter
fact contributes to a significant part of the
 current paper concerning components of a Kerdock code, however
 we give an alternative combinatorial
proof for this fact as we essentially need a convenient way of
finding the intersection numbers of the scheme. Delsarte (Theorem
6.10, \cite{Del}) proved that the restriction of the Hamming
scheme to the dual of any linear uniformly packed code (in
particular, the code $C^{\perp}_{1,3}$, which is dual of the BCH
code $C_{1,3}$) is an association scheme.

In this paper we show that the punctured Kerdock code have two
$i$-components for any coordinate position $i$, while the dual of
a linear uniformly packed code with parameters of BCH code
$C_{1,3}$ is $i$-component for any coordinate position $i$.

\section{Components of Kerdock code}

In the section we fix $n$ to be $2^m$, for even $m$, $m\geq 4$.
 A {\it Kerdock code} $K$ is a binary code of length $n$,
 and minimum distance $d=(n-\sqrt{n})/2$,
consisting of the first order Reed--Muller code RM$(1,m)$ and
$2^{m-1}-1$ its cosets such that the weights of the codewords in a
coset are $d$ or $n-d$. These codes were firstly constructed in
\cite{Kerdock} and further generalizations were obtained in
\cite{Kantor}, \cite{Ham}.

The weight distribution of a Kerdock code is well-known and is
related with the weight distribution of a Preparata code via
McWilliams identities \cite{MWSl}.

\bigskip

\begin{tabular}{|c|c|}
  \hline
  i & The number of codewords of weight i \\
  \hline
  0 & 1 \\
  d & $n(n-2)/2$ \\
  $\frac{n}{2}$ & $2n-2$ \\
  n-d & $n(n-2)/2$ \\

    n & 1 \\
  \hline

\end{tabular}

\bigskip


In order to prove that a Kerdock code consists of two
$i$-component we use the following properties of the code, that
come from its definition. Without loss of generality, ${\bf 0}^n$
is in a Kerdock code.

(K1) Any  code $K$ is a union of $n/2$ cosets of RM$(1,m)$.

(K2) It is true that $K_{n/2}\bigcup \{ {\bf 0}^n, {\bf 1}^n \}=
\mbox{ RM}(1,m)$.

(K3) The distance between codewords from different cosets of
RM$(1,m)$ in the code $K$ is either $d$ or $n-d$.

(K4) Nonzero distances between codewords in any coset are either
$n/2$ or $n$.

(K5) RM$(1,m) \subseteq Ker(K)$.

The  property below follows from (K2)-(K5):

 (K6) If for $x,y
\in K$ we have $w(x+y)=n/2$ then $x+y\in K$.

\begin{theorem}\label{TMW}\cite{MWSl}[Theorem 9, Ch. 9]
Let $C$ be a code of length $n$ and minimum distance $d$,
$C^{\perp}$ be a code which is formally dual to $C$,
$\bar{s}=|I(C^{\perp})\setminus\{0, n\}|$. Then the set of
codewords of any fixed nonzero weight in $C^{\perp}$ is
$(d-\bar{s})$-design.
\end{theorem}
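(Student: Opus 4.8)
The plan is to prove the statement by the classical Assmus--Mattson argument, recast in terms of the shortened codes of $C^{\perp}$ and the MacWilliams transform, which is available because $C$ and $C^{\perp}$ form a formally dual pair. Fix a nonzero weight $w$ and set $t=d-\bar{s}$; the goal is to show that the supports of the codewords in $(C^{\perp})_{w}$ form a $t$-design, i.e.\ that for every $t$-subset $T$ of the $n$ coordinates the number of weight-$w$ codewords of $C^{\perp}$ whose support contains $T$ does not depend on the choice of $T$. For a coordinate set $T$ with $|T|=i\le t$, let $b_{w}(T)$ denote the number of codewords of $C^{\perp}$ of weight $w$ that vanish on $T$; after deleting the coordinates of $T$ these are exactly the weight-$w$ codewords of the shortening $S_{T}(C^{\perp})$, a code of length $n-i$. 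By an inclusion--exclusion over the subsets of $T$, the count of weight-$w$ codewords containing $T$ in their support equals $\sum_{j=0}^{t}(-1)^{j}\binom{t}{j}\,b_{w}(j)$, so it suffices to prove that each $b_{w}(T)$ depends only on $i=|T|$ for all $|T|\le t$.

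First I would record two structural facts about $S_{T}(C^{\perp})$ that hold whenever $i<d$. Since $C$ has minimum distance $d>i$, puncturing $C$ at $T$ is injective, so the punctured code $C_{|T}$ has the same size as $C$ and minimum distance at least $d-i$; by shortening/puncturing duality $S_{T}(C^{\perp})=(C_{|T})^{\perp}$, whence $S_{T}(C^{\perp})$ has size $2^{\,n-i}/|C|$ and a dual whose minimum distance is at least $d-i$, both independent of the particular $T$. Moreover, a nonzero codeword vanishing on a nonempty $T$ cannot have full weight $n$, so the nonzero weights occurring in $S_{T}(C^{\perp})$ all lie in $I(C^{\perp})\setminus\{0,n\}$; hence $S_{T}(C^{\perp})$ has at most $\bar{s}$ distinct nonzero weights $w_{1},\dots,w_{\bar{s}}$.

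The heart of the argument is then a linear-algebra step. Because $C_{|T}$ has no nonzero words of weight $1,\dots,d-i-1$, the MacWilliams identities applied to the pair $\bigl(C_{|T},\,S_{T}(C^{\perp})\bigr)$ of length $N=n-i$ give $\sum_{\ell=1}^{\bar{s}} b_{w_{\ell}}(T)\,K_{j}(w_{\ell})=c_{j}$ for $j=0,1,\dots,d-i-1$, where each right-hand side $c_{j}$ (coming from the prescribed low-weight coefficients of $C_{|T}$ and the size $2^{\,n-i}/|C|$) is $T$-independent. Since $i\le t=d-\bar{s}$ we have $d-i-1\ge \bar{s}-1$, so the rows $j=0,1,\dots,\bar{s}-1$ yield $\bar{s}$ linear relations in the $\bar{s}$ unknowns $b_{w_{1}}(T),\dots,b_{w_{\bar{s}}}(T)$, with coefficient matrix $\bigl(K_{j}(w_{\ell})\bigr)_{0\le j\le \bar{s}-1,\ 1\le \ell\le \bar{s}}$. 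As $K_{0},\dots,K_{\bar{s}-1}$ is a basis of the polynomials of degree $<\bar{s}$ and the weights $w_{1},\dots,w_{\bar{s}}$ are distinct, this is a generalized Vandermonde matrix and hence nonsingular; therefore the weight distribution of $S_{T}(C^{\perp})$ is uniquely determined by its $T$-independent right-hand side and depends only on $i=|T|$. Feeding $b_{w}(\,\cdot\,)=b_{w}(|\,\cdot\,|)$ back into the inclusion--exclusion of the first paragraph makes the number of weight-$w$ codewords containing a fixed $t$-set constant, which is exactly the asserted $(d-\bar{s})$-design property.

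The step I expect to be the main obstacle is making shortening/puncturing duality and the transform rigorous for a formally dual pair that need not be linear, as in the Kerdock/Preparata setting: there one cannot invoke an honest linear dual, and both the size of $S_{T}(C^{\perp})$ and the vanishing low-weight conditions of its ``dual'' $C_{|T}$ must instead be controlled through Delsarte's inner and dual distributions, together with a justification that the relevant weight counts behave like distance counts (automatic for linear or distance-invariant codes). The nonsingularity of the Krawtchouk evaluation matrix is the clean algebraic core, but verifying that the shortening operation preserves the dual-distance bound in this more general framework is where the care is needed.
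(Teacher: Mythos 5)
The paper does not actually prove this theorem: it is quoted from MacWilliams and Sloane with a citation, so there is no internal proof to compare against, and your proposal has to be measured against the classical argument it invokes. In the genuinely linear case your write-up is the standard Assmus--Mattson mechanism and is correct as far as it goes: the inclusion--exclusion reduction to showing that the weight distribution of each shortened code $S_T(C^{\perp})$ depends only on $|T|$, the count that $i\le t=d-\bar{s}$ yields at least $\bar{s}$ usable Krawtchouk moment equations from the vanishing weights $1,\dots,d-i-1$ of $C_{|T}$, and the nonsingularity of $\bigl(K_j(w_\ell)\bigr)$ via the degree filtration reducing it to a Vandermonde matrix are all sound.

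The genuine gap is the one you name in your final paragraph but do not close, and it is not a side issue: the theorem's hypothesis is only \emph{formal} duality, and the paper's essential application of it (property (K7): the weight classes of the nonlinear Kerdock code are $3$-designs, via the formally dual Preparata code) lies exactly in the case your proof does not cover. For merely formally dual codes the identities your argument rests on --- $S_T(C^{\perp})=(C_{|T})^{\perp}$, the size formula $|S_T(C^{\perp})|=2^{n-i}/|C|$, and the vanishing low-weight ``dual'' coefficients of the shortened code --- have no meaning: formal duality is a relation between two weight enumerators and says nothing a priori about shortenings. The standard repair (Delsarte's, and in effect how the cited source's proof runs) is this: assuming $\zero^n\in C^{\perp}$ and $C^{\perp}$ distance invariant, its distance distribution equals its weight distribution, whose MacWilliams transform is, up to scaling, the weight distribution of $C$ and hence vanishes in degrees $1,\dots,d-1$ (note one needs the minimum \emph{weight} of $C$, not merely its minimum distance, to be $d$); since the transform of a distance distribution at degree $k$ is $\frac{1}{|C^{\perp}|^2}\sum_{w(z)=k}\bigl(\sum_{x\in C^{\perp}}(-1)^{x\cdot z}\bigr)^2$, a sum of squares, this vanishing forces $\sum_{x\in C^{\perp}}(-1)^{x\cdot z}=0$ pointwise for all $z$ with $1\le w(z)\le d-1$, i.e.\ $C^{\perp}$ is an orthogonal array of strength $d-1$. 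From that pointwise Fourier vanishing your entire computation survives verbatim with $S_T(C^{\perp})$ treated as a bare subset: its size is $|C^{\perp}|/2^{i}$ and its Krawtchouk moments of orders $1,\dots,d-i-1$ vanish, which is all your Vandermonde step needs. Two final cautions: without distance invariance the statement cannot follow from formal duality of weight enumerators alone (translating a code changes its weight enumerator but not the design question), and the paper defines $\bar{s}$ through the distance set $I(C^{\perp})$ while your unknowns are weights --- again reconciled only by distance invariance together with $\zero^n\in C^{\perp}$, conditions satisfied by the Kerdock and $C^{\perp}_{1,3}$ applications but silently absent from the statement and from your sketch.
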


 Theorem \ref{TMW} applied to  Preparata and
Kerdock  codes implies the following:

(K7)\cite{MWSl} $K_d$, $K_{n/2}$, $K_{n-d}$ are 3-designs.

\medskip


In order to proceed further we need the following lemma.

\begin{lemma}\label{magic}
Let $x$ be a vector of weight $i$, $D$ be
$1-(n,j,\lambda_1)$-design. Let the distance between $x$ and
vectors of $D$ take values $k_1,\ldots, k_s$ with multiplicities
$\delta^{k_1},\ldots,\delta^{k_s}$ respectively. Then the
following formula holds: \begin{equation}\label{lemma1*} \sum_{l=
1}^{s}\delta^{k_l}\cdot \frac{i+j-k_l}{2}=i\lambda_1
\end{equation} and $\delta^{k_1},\delta^{k_2}$ are uniquely
defined by $\delta^{k_3},\ldots,\delta^{k_s}$.
\end{lemma}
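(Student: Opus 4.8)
The plan is to recognize the quantity $\frac{i+j-k_l}{2}$ as an intersection size and then reduce the identity to a double count. For any vector $y$ of weight $j$ at Hamming distance $k_l$ from $x$ (which has weight $i$), the standard relation $d(x,y)=w(x)+w(y)-2|\supp(x)\cap\supp(y)|$ gives $|\supp(x)\cap\supp(y)|=\frac{i+j-k_l}{2}$. Thus the weight $\frac{i+j-k_l}{2}$ attached to the distance class $k_l$ is exactly the size of the support overlap of $x$ with every block of $D$ in that class, so the left-hand side of \eqref{lemma1*} equals $\sum_{y\in D}|\supp(x)\cap\supp(y)|$, where the outer sum now runs over all blocks of the design rather than over distance classes.

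First I would rewrite this sum by exchanging the order of summation over the points of $\supp(x)$ and the blocks of $D$:
\begin{equation*}
\sum_{y\in D}|\supp(x)\cap\supp(y)|=\sum_{p\in\supp(x)}\bigl|\{y\in D:p\in\supp(y)\}\bigr|.
\end{equation*}
Since $D$ is a $1$-$(n,j,\lambda_1)$-design, every point $p$ lies in exactly $\lambda_1$ blocks, so the inner count is $\lambda_1$ independently of $p$. As $|\supp(x)|=i$, the whole expression collapses to $i\lambda_1$, which is precisely \eqref{lemma1*}. This double-counting step is the heart of the argument and, being purely combinatorial, presents no real obstacle once the intersection-size interpretation is in place.

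For the second assertion I would add the elementary block count $\sum_{l=1}^{s}\delta^{k_l}=b$, where $b=n\lambda_1/j$ is the total number of blocks of the design (from the incidence identity $bj=n\lambda_1$). Together with \eqref{lemma1*} this yields a system of two linear equations in the unknowns $\delta^{k_1},\delta^{k_2}$, treating $\delta^{k_3},\ldots,\delta^{k_s}$ as known:
\begin{equation*}
\delta^{k_1}+\delta^{k_2}=b-\sum_{l\geq 3}\delta^{k_l},\qquad
\tfrac{i+j-k_1}{2}\,\delta^{k_1}+\tfrac{i+j-k_2}{2}\,\delta^{k_2}=i\lambda_1-\sum_{l\geq 3}\tfrac{i+j-k_l}{2}\,\delta^{k_l}.
\end{equation*}
The coefficient determinant of this $2\times 2$ system equals $\frac{i+j-k_2}{2}-\frac{i+j-k_1}{2}=\frac{k_1-k_2}{2}$, which is nonzero because the distances $k_1,\ldots,k_s$ are pairwise distinct. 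Hence the system is nonsingular and determines $\delta^{k_1},\delta^{k_2}$ uniquely in terms of $\delta^{k_3},\ldots,\delta^{k_s}$, completing the proof.
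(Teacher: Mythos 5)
Your proof is correct and follows essentially the same route as the paper: the same double count of $\sum_{y\in D}|\supp(x)\cap\supp(y)|$ using the $1$-design property, combined with the block-count equation $\sum_l \delta^{k_l}=n\lambda_1/j$ to pin down $\delta^{k_1},\delta^{k_2}$. Your only addition is making explicit the nonsingularity of the resulting $2\times 2$ system (determinant $\frac{k_1-k_2}{2}\neq 0$), which the paper leaves implicit.
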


\begin{proof}
 Let the distance between the vector
$x$ and an arbitrary vector $y$ from $D$ be $k_l$, then there are
$$\frac{i+j-k_l}{2}$$ common unit coordinates for $x$ and $y$,
$l=1,2,\ldots,k_s$. On the other hand, there are exactly
$\lambda_1$ vectors of $D$ that have a prefixed coordinate to be
$1$. Double counting of $$\sum_{y\in D}|\{i: x_i=y_i=1\}|$$ gives
$\sum_{l= 1}^{s}\delta^{k_l}\cdot \frac{i+j-k_l}{2}=i\lambda_1 $.
Finally $\delta^{k_1},\delta^{k_2}$ are uniquely defined by
(\ref{lemma1*}) taking into account that $\sum_{l=
1}^{s}\delta^{k_l}=|D|$, where $|D|=\lambda_1 \frac{n}{j}.$
\end{proof}

Note that $I(K^{\prime\prime})=\{0,d,n/2,n-d\}$, as we exclude the
all-one vector in $K^{\prime}$.

\begin{theorem}\label{ShKass}
The restriction of the Hamming scheme to a doubly shortened
Kerdock code $K^{\prime\prime}$ is an association scheme.
\end{theorem}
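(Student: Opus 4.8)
The plan is to verify the association-scheme axioms directly for the doubly shortened Kerdock code $K^{\prime\prime}$, which amounts to showing that the intersection numbers $\delta_{i,j}^k(K^{\prime\prime})$ are well-defined, i.e.\ that for a fixed pair $(x,y)$ of codewords at distance $R_i$, the count of codewords $z$ at distance $j$ from $x$ and distance $k$ from $y$ depends only on $(i,j,k)$ and not on the particular pair $(x,y)$. Since $I(K^{\prime\prime})=\{0,d,n/2,n-d\}$, there are only four relations to track, so the bookkeeping is finite. The first step is to reduce everything to the ambient Kerdock code $K$ and its rich design structure: by (K7) the sets $K_d$, $K_{n/2}$, $K_{n-d}$ are $3$-designs, and the shortening/puncturing operations interact predictably with the coset decomposition (K1) into translates of $\mbox{RM}(1,m)$.

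First I would fix a codeword $z\in K^{\prime\prime}$ of weight $j$ (after translating so that one of the two fixed vertices is $\zero$, using that $\mbox{RM}(1,m)\subseteq Ker(K)$ by (K5)) and apply Lemma~\ref{magic} with $D$ being the appropriate fixed-weight design in $K$. The key observation is that Lemma~\ref{magic} pins down the multiplicities $\delta^{k_1},\delta^{k_2}$ from the remaining multiplicities together with the total $|D|$; combined with the fact that in the small distance set $\{0,d,n/2,n-d\}$ there are at most a couple of ``free'' distances, this should force the distance distribution of any codeword relative to the code to be constant. In other words, the counting identity~(\ref{lemma1*}), applied once with the weight data from the $3$-design property, over-determines the profile enough that the intersection numbers become functions of $(i,j,k)$ alone. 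I would carry out this computation separately for each value of $i\in\{d,n/2,n-d\}$, exploiting the symmetry $x\mapsto x+\u^n$ (which is available on $K$ by (K2) and exchanges weights $d\leftrightarrow n-d$) to halve the number of genuinely distinct cases.

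The second main step is to descend from $K$ to $K^{\prime\prime}$ honestly, because shortening twice removes two coordinates and restricts to the subcode that is zero on them; I must check that the design-theoretic counting performed in $K$ restricts correctly and that property (K6) (closure of $K$ under sums of weight $n/2$) is used to control how weight-$n/2$ codewords, which by (K2) essentially constitute $\mbox{RM}(1,m)$, distribute their incidences after the two coordinates are deleted. Verifying the first two axioms (the diagonal is a relation, and each relation equals its own inverse) is immediate here since Hamming distance is symmetric and $d(x,x)=0$ singles out the identity relation, so the entire content is the third axiom on intersection numbers.

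The hard part will be establishing that the profile $(\delta^{k_1},\ldots,\delta^{k_s})$ is genuinely independent of the base pair $(x,y)$ rather than merely constrained by~(\ref{lemma1*}): the lemma alone fixes two multiplicities in terms of the others, so I expect to need a second independent linear relation — either a $2$-design version of the double-counting argument (counting pairs of common unit coordinates, which a $3$-design certainly supports) or an explicit use of the weight distribution table for $K$ — to close the system and determine \emph{all} the $\delta^{k_l}$ uniquely. Reconciling the counts obtained in $K$ with the two deleted coordinates, and ruling out dependence on which coset of $\mbox{RM}(1,m)$ the base codewords lie in, is where the argument will require the most care.
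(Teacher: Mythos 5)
Your overall architecture matches the paper's: reduce the theorem to showing that each $\delta^k_{i,j}(x)$ is independent of the base codeword $x$, and use the fact that the fixed-weight sets of $K^{\prime\prime}$ are $1$-designs together with Lemma \ref{magic} to determine two multiplicities from the remaining ones. But you have correctly located, and not closed, the real gap: with $I(K^{\prime\prime})=\{0,d,n/2,n-d\}$ there are three unknown multiplicities $\delta^{d},\delta^{n/2},\delta^{n-d}$ and only two linear relations (the total count and the first-moment identity (\ref{lemma1*})), so a third constraint is needed --- and both of your candidates for it fail. A second-moment (``$2$-design'') count is unavailable: each shortening lowers design strength by one, so the weight-$j$ codewords of $K^{\prime\prime}$, being the blocks of the $3$-design $K_j$ that avoid two fixed points, form only a $1$-design; for them to be a $2$-design you would need, by inclusion-exclusion, constancy of the number of blocks of $K_j$ through four prescribed points, i.e.\ a $4$-design property that (K7) does not provide. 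Doing the count upstairs in $K$ does not evade this, because the codewords relevant to $K^{\prime\prime}$ are exactly those vanishing on the two deleted coordinates, and isolating that pattern costs the same two levels of design strength. Finally, the weight distribution of $K$ constrains only averages over the choice of $x$, not the per-codeword distance profile, so it cannot supply the missing relation either.

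The paper's actual third ingredient is structural, not a further moment identity. By the coset properties (K3)/(K4), a weight-$n/2$ codeword lies in RM$(1,m)$ while weight-$d$ and weight-$(n-d)$ codewords lie in other cosets of RM$(1,m)$, so the distance $n/2$ simply cannot occur between them; hence whenever exactly one of $i,j$ equals $n/2$, only the distances $d$ and $n-d$ occur and Lemma \ref{magic} alone pins down both multiplicities (the case $i=j=n/2$ is immediate since the nonzero codewords of RM$(1,m)^{\prime\prime}$ are pairwise at distance $n/2$). Then, for arbitrary $i,j$, property (K6) --- which you cite but never actually deploy --- gives the bijection $z\mapsto z+x$ from $\{z\in K^{\prime\prime}_j: d(x,z)=n/2\}$ onto $\{z^{\prime}\in K^{\prime\prime}_{n/2}: d(x,z^{\prime})=j\}$, whence $\delta^{n/2}_{i,j}(x)=\delta^{j}_{i,n/2}(x)$, which is independent of $x$ by the previous step; this determines $\delta^{n/2}_{i,j}$, after which Lemma \ref{magic} yields $\delta^{d}_{i,j}$ and $\delta^{n-d}_{i,j}$. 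One further slip: the complementation symmetry $x\mapsto x+{\bf 1}^n$ you plan to exploit is not available inside $K^{\prime\prime}$ itself (the all-one vector is destroyed by shortening, which is precisely why $I(K^{\prime\prime})$ omits $n$); it resurfaces only for ${\overline {K^{\prime\prime}}}$ via Lemma \ref{lemma5}, which is used later for Theorem \ref{Comp}, not here.
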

\begin{proof}

In the proof of the current theorem we use the following
convention. By $\delta^k_{i,j}(x)$ we denote the number of
codewords of weight $j$ in $K^{\prime\prime}$ at distance $k$ from
the weight $i$ codeword $x$ in $K^{\prime\prime}$. Obviously, the
restriction of the Hamming scheme to $K^{\prime\prime}$ is an
association scheme if $\delta^k_{i,j}(x)$ for all $i, j, k \in
I(K^{\prime\prime})$ are shown to be independent on the choice of
a codeword $x$ of weight $i$ regardless of translation of
$K^{\prime\prime}$ by its codeword. The proof below relies only on
properties (K1)-(K7) of a Kerdock code $K$ that are independent on
the translation of the code.


\begin{lemma} \label{lemma2}
The number $\delta^k_{i,j}(x)$ does not depend on the choice of a
codeword $x$ in $K^{\prime\prime}_i$ if $i$ or $j$ equals to
$n/2$.
\end{lemma}

\begin{proof}The property (K4) implies that the distances between codewords from
$K^{\prime\prime}_{n/2}$ and $K^{\prime\prime}_{d}$ or
$K^{\prime\prime}_{n-d}$ cannot be $n/2$. Moreover (K7) implies
that the sets of the fixed weight codewords of a doubly shortened
Kerdock code are 1-designs, so by Lemma \ref{magic}, the
intersection numbers $\delta_{i, j}^d(x)$ and $\delta_{i,
j}^{n-d}(x)$ are uniquely determined and do not depend on a choice
of $x$ if $i$ and $j$ are not equal to  $n/2$ simultaneously.

 Finally, $RM(1,m)^{\prime}$ is a linear Hadamard
code, so the set of nonzero codewords $K^{\prime\prime}_{n/2}$ of
its shortening $(RM(1,m)_{n/2})^{\prime\prime}$ are also at
distance $n/2$ apart pairwise, so $\delta_{n/2,n/2}^k(x)$ is
$n/4-1$ if and only if $k=n/2$ and is zero otherwise.
\end{proof}

\begin{lemma} \label{lemma3}
Let $n/2\in \{i,j,k\}$. Then the number $\delta^k_{i,j}(x)$ does
not depend on the choice of a codeword $x$ of weight $i$.
\end{lemma}

\begin{proof}

 We
show that $\delta_{i, j}^{n/2}(x)$=$\delta_{i, n/2}^{j}(x)$.
Consider the set $\{z\in K^{\prime\prime}_j, d(z,x)=n/2\}$. By
definition it is of the  size $\delta^{n/2}_{i,j}(x)$. Consider
the translation of the set by $x\in K^{\prime\prime}_{n/2}$. Since
$x$ is of weight $n/2$, the property (P6) implies that $x+z$ is a
codeword of the doubly shortened Kerdock code $K^{\prime\prime}$.
The substitution $z^{\prime}=z+x$ gives  the equality
$$\{z+x:z\in K^{\prime\prime}_j, d(z,x)=n/2\} =\{z^{\prime}\in
K_{n/2}^{\prime\prime}, d(z^{\prime},x)=j\}.$$ The cardinality of
the right hand side is $\delta^{j}_{i,n/2}(x)$, so
$\delta^{n/2}_{i,j}(x)=\delta^{j}_{i,n/2}(x)$ and the number is
independent on $x$ by Lemma \ref{lemma2}.
\end{proof}

\medskip

\begin{lemma} \label{lemma4}
The number $\delta^k_{i,j}(x)$ does not depend on the choice of a
codeword $x$ of weight $i$ for $i,j,k \in I(K^{\prime\prime})$.
\end{lemma}

\begin{proof} Since $I(K^{\prime\prime})=\{0,d,n/2,n-d\}$, the
nonzero distances between codewords from $K^{\prime\prime}_i$ and
$K^{\prime\prime}_j$ take not more than three nontrivial values.
 The property (K7)
implies that $K_j^{\prime\prime}$ is a 1-design and by Lemma
\ref{lemma3} the number $\delta_{i,j}^{n/2}(x)$ of codewords at
distance $n/2$ in $K_j^{\prime\prime}$ from  $x$ is independent on
choice of $x$ in $K_i^{\prime\prime}$, so the numbers
$\delta^d_{i,j}(x)$ and $\delta^{n-d}_{i,j}(x)$ are independent on
$x$ by Lemma \ref{magic}.
\end{proof}

The considerations in the beginning of the proof of the theorem
and Lemma \ref{lemma4} imply that the restriction of the Hamming
scheme to $K^{\prime\prime}$ is an association scheme.

\end{proof}

In order to find components of the punctured Kerdock code, we need
one more lemma.

\begin{lemma} \label{lemma5}
Let $C$ be a code of length $n'$  such that the restriction of the
Hamming scheme to its codewords is an association scheme. Let
$I(C)$ be such that $I(C) \cap \{n'-i: i \in I(C)\}=\varnothing$.
Then the restriction of the Hamming scheme to the code ${\overline
C}=C\bigcup({\bf 1}^{n'}+C)$ is an association scheme.
\end{lemma}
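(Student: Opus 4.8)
The plan is to verify the three association-scheme axioms for the distance partition of $\overline{C}$, indexing the relations by the distance values realized in $\overline{C}$. First I would record the elementary distance identities governing $\overline{C}$: for $a,b\in C$ one has $d(a,b)=d({\bf 1}^{n'}+a,{\bf 1}^{n'}+b)$, while $d(a,{\bf 1}^{n'}+b)=n'-d(a,b)$. Hence every realized distance lies in $I(C)\cup\{n'-i:i\in I(C)\}$, and the hypothesis $I(C)\cap\{n'-i:i\in I(C)\}=\varnothing$ is exactly what makes each realized distance \emph{decodable}: a distance in $I(C)$ can only occur between two codewords on the same side ($C$ or ${\bf 1}^{n'}+C$), and a distance in $\{n'-i:i\in I(C)\}$ only between codewords on opposite sides. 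I would also note that the same disjointness forces $C\cap({\bf 1}^{n'}+C)=\varnothing$, since an overlap would put $n'$ into $I(C)$ and hence $0$ into the forbidden intersection; thus $|\overline{C}|=2|C|$.

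The first two axioms are then immediate. Since $0\in I(C)$ and $d(x,y)=0$ holds iff $x=y$, the diagonal is the relation $R_0$; and every distance relation is symmetric because $d$ is, so each relation equals its own inverse.

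For the intersection numbers I would exploit the isometry $v\mapsto {\bf 1}^{n'}+v$, which maps $\overline{C}$ onto itself and preserves all pairwise distances. Applying it to a triple $(x,y,z)$ shows that the count of $z\in\overline{C}$ with $d(x,z)=b$ and $d(y,z)=c$ is unchanged when $(x,y)$ is replaced by $({\bf 1}^{n'}+x,{\bf 1}^{n'}+y)$, so I may assume $x\in C$ throughout. With $x$ fixed in $C$ I would split into the case $y\in C$ (so the fixed distance $a=d(x,y)\in I(C)$) and the case $y={\bf 1}^{n'}+y_0$ with $y_0\in C$ (so $a=n'-d(x,y_0)$, giving $d(x,y_0)=n'-a\in I(C)$). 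In each case I would further split the candidate $z$ according to whether $z\in C$ or $z\in{\bf 1}^{n'}+C$, rewrite the two constraints $d(x,z)=b$ and $d(y,z)=c$ via the distance identities, and observe that every surviving subcase counts exactly the codewords $w\in C$ lying at prescribed distances from two fixed codewords of $C$ at a prescribed mutual distance, that is, an intersection number of the association scheme on $C$, which is well defined by assumption; for instance, $z\in C$ in the $y\in C$ case gives $\delta_{a,b}^{c}(C)$, while $z\in{\bf 1}^{n'}+C$ gives $\delta_{a,n'-b}^{n'-c}(C)$.

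The one point needing care, and the reason the disjointness hypothesis is indispensable, is to check that these contributions never conflict: the decodability observation guarantees that for a fixed triple of target distances $(a,b,c)$ at most one of the two subcases can be nonzero, since a single distance value cannot lie in both $I(C)$ and $\{n'-i:i\in I(C)\}$, and the incompatible combinations simply contribute $0$. Consequently the intersection number of $\overline{C}$ for any triple $(a,b,c)$ equals a single, translation-independent intersection number of $C$ (or $0$), independent of the chosen pair $(x,y)$. This establishes the third axiom and completes the proof. The main obstacle is purely organizational: keeping straight the four same-side/opposite-side combinations for $(x,y)$ and $z$ and confirming that in every combination the surviving count is one of the intersection numbers of $C$.
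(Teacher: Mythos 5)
Your proof is correct and takes essentially the same approach as the paper: the paper's (much terser) proof likewise observes that any triple of distances containing an odd number of ``complemented'' values $n'-i$ forces the intersection number of $\overline{C}$ to vanish, while the even cases reduce to the intersection numbers $\delta_{i,j}^k(C)$ of the scheme on $C$. Your write-up fills in the details the paper leaves implicit---the disjointness $C\cap(\mathbf{1}^{n'}+C)=\varnothing$, the side-decoding of distances, and the reduction to $x\in C$ via the isometry $v\mapsto \mathbf{1}^{n'}+v$---but the underlying case analysis is the same.
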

\begin{proof}
If $i$ is in $I(C)$, denote by $i'$ the number $n'-i$. If there
are given three distances from $I({\overline C})$ and even
belonging to $I(C)$ is even then the
 corresponding intersection number of ${\overline
C}$ is zero: $$\delta_{i',j}^k({\overline
C})=\delta_{i,j'}^k({\overline C})=\delta_{i,j}^{k'}({\overline
C})=\delta_{i',j'}^{k'}({\overline C})=0.$$
 Otherwise, the intersection number of ${\overline
C}$ coincides with that of $C$:
 \begin{equation}\label{eqal}\delta_{i',j'}^k({\overline
 C})=\delta_{i',j}^{k'}({\overline
 C})=\delta_{i,j'}^{k'}({\overline
 C})=\delta_{i,j}^k({\overline
 C})=\delta_{i,j}^k(
 C).\end{equation}
\end{proof}

\begin{theorem} \label{Comp}
Let $K^*$ be a punctured Kerdock code, $i \in \{1,\ldots, n-1\}$.
The code $K^*$ consists of two $i$-components and codewords are in
the same component if their puncturings in $i$th position have
weights of the same parity.
 \end{theorem}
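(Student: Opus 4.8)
The plan is to prove the two directions separately: first that the parity of the weight of the $i$-puncturing is constant on each $i$-component (so there are at least two components), and then that each of the two parity classes is connected (so there are at most two). Throughout I identify a codeword $u\in K^*$ with the codeword $x\in K$ from which it is obtained by deleting a fixed coordinate $p$; since the minimum distance of $K$ exceeds $1$ this correspondence is a bijection, and since all weights of $K$ are even (by (K2), together with $d,n/2,n-d$ all being even for even $m\ge 4$) the minimum distance of $K^*$ equals $d-1$. Rewriting the edge condition in terms of $K$, two codewords $u,v\in K^*$ are adjacent in $G_i(K^*)$ exactly when the corresponding $x,y\in K$ satisfy $d(x,y)=d$ and disagree in both coordinates $p$ and $i$: a distance $d-1$ in $K^*$ can only arise from a distance-$d$ pair of $K$ split by the deletion of $p$, and the requirement $u_i\neq v_i$ becomes $x_i\neq y_i$. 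The weight of the $i$-puncturing of $u$ is $w(x)-x_p-x_i$, of parity $x_p+x_i \bmod 2$; since an edge flips both $x_p$ and $x_i$, this parity is preserved along edges. Hence every $i$-component lies inside one of the two classes $\{x_p=x_i\}$ and $\{x_p\neq x_i\}$, giving at least two components. Finally, by (K5) we have $\mathrm{RM}(1,m)\subseteq Ker(K)$, and as affine functions separate the points $p$ and $i$ there is $r\in \mathrm{RM}(1,m)$ with $r_p\neq r_i$; translation by $r$ preserves $K$ and the edge relation and exchanges the two classes, so it suffices to show that the class $\{x_p=x_i\}$ is connected.

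For the connectivity step I pass to the double puncturing. Sending each $u$ in the class $\{x_p=x_i\}$ to its $i$-puncturing is injective and identifies the class with $\overline{K^{\prime\prime}}=K^{\prime\prime}\cup(\mathbf 1+K^{\prime\prime})$, where $x_p=x_i=0$ yields $K^{\prime\prime}$ and $x_p=x_i=1$ yields $\mathbf 1+K^{\prime\prime}$; under this map adjacency becomes the relation ``distance $d-2$'', since $x_i\neq y_i$ is forced once the $i$-puncturings lie at distance $d-2$. As $I(K^{\prime\prime})=\{0,d,n/2,n-d\}$ does not contain $d-2$, the distance-$(d-2)$ graph of $\overline{K^{\prime\prime}}$ is bipartite between $K^{\prime\prime}$ and $\mathbf 1+K^{\prime\prime}$, and the neighbours of $a\in K^{\prime\prime}$ are precisely the vectors $\mathbf 1+a'$ with $a'\in K^{\prime\prime}$ and $d(a,a')=n-d$. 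By Theorem \ref{ShKass} the restriction of the Hamming scheme to $K^{\prime\prime}$ is an association scheme, so the number of such $a'$ is a sum of intersection numbers, independent of $a$, and it is strictly positive because the distance $n-d$ is realized from every codeword (by the $3$-design property (K7) and Lemma \ref{magic}); thus the graph is regular of positive degree.

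The remaining task, and the main obstacle, is to show this graph is connected: an association scheme guarantees regularity and positivity of the valency but never connectivity by itself. My plan is to extract connectivity from the coset structure of $K$. Writing $K$ as the union of the $n/2$ cosets of $\mathrm{RM}(1,m)$, linearity of the affine functions shows that inside each coset the four value pairs $(x_p,x_i)$ occur equally often, while the distance-$d$ edges are between distinct cosets (by (K3)--(K4)) and realize the transitions between the $(0,0)$- and $(1,1)$-parts that the parity class allows; positivity of the relevant intersection numbers then permits enough minimum-distance moves to pass from any codeword of the class to any other. Concretely, two steps of the bipartite graph move within $K^{\prime\prime}$ by the distance-$(n-d)$ relation, so the problem reduces to proving that this relation (equivalently, the distance-$(d-2)$ relation on $\overline{K^{\prime\prime}}$) generates the whole scheme; I expect to verify this by computing that iterated $(n-d)$-steps reach every value of $I(K^{\prime\prime})$, using the intersection numbers supplied by Theorem \ref{ShKass}. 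Once the class $\{x_p=x_i\}$ is shown to be connected, the exchange automorphism of the first paragraph transfers connectivity to the class $\{x_p\neq x_i\}$, and we conclude that $K^*$ has exactly two $i$-components, namely the two parity classes.
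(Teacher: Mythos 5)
Your first two paragraphs are correct and coincide with the paper's own reduction: the parity invariant $x_p+x_i \bmod 2$ showing each $i$-component lies in one of the two classes, the kernel translation exchanging the classes, and the identification of the class $\{x_p=x_i\}$ with $\overline{K^{\prime\prime}}=K^{\prime\prime}\cup(\mathbf{1}+K^{\prime\prime})$ equipped with the distance-$(d-2)$ graph are all exactly the paper's setup (your explicit observation that this graph is bipartite between $K^{\prime\prime}$ and $\mathbf{1}+K^{\prime\prime}$ is a nice touch not spelled out in the paper). But the third paragraph, which you yourself identify as ``the main obstacle,'' is left as a plan (``My plan is\dots'', ``I expect to verify\dots''): connectivity of the distance-$(d-2)$ graph of $\overline{K^{\prime\prime}}$ is never actually established, and that is precisely where all the substance of the paper's proof lies. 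So the proposal has a genuine gap at its central step.

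Moreover, your framing of the obstacle — ``an association scheme guarantees regularity and positivity of the valency but never connectivity by itself'' — misses the mechanism that makes the paper's argument work. Because the numbers $\delta^k_{i,j}$ are independent of the choice of base codeword, a single nonzero intersection number upgrades ``some pair realizes this adjacency'' to ``\emph{every} codeword of weight $j$ has a distance-$(d-2)$ neighbour of weight $i$.'' The paper then anchors everything at the codewords $\mathbf{0}$ and $\mathbf{1}^{n-2}$ of $\overline{K^{\prime\prime}}$: $\mathbf{0}$ is adjacent to the entire class of weight-$(d-2)$ codewords; applying Lemma \ref{magic} to the $1$-design formed by the nonzero codewords of $RM(1,m)^{\prime\prime}$ gives the explicit nonzero values (\ref{eqL1}) and (\ref{eqL2}), so every weight-$n/2$ codeword hangs off the weight-$(d-2)$ class and every weight-$(n-d-2)$ codeword off the weight-$n/2$ class; complementation by $\mathbf{1}$ transfers this chain to the weights $d$, $n/2-2$, $n-d$; and a Plotkin-bound argument shows $\delta_{d-2,d}^{d-2}(\overline{K^{\prime\prime}})\neq 0$, bridging the two halves. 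Your proposed alternative — proving that the distance-$(d-2)$ relation generates the whole scheme via iterated intersection-number products — could in principle work, since connectivity of a relation in a scheme is decidable from its intersection numbers, but it would require computing substantially more of them than you produce (you compute none), and your appeal to ``the coset structure of $K$'' and ``positivity of the relevant intersection numbers'' is not an argument. As written, the theorem is not proved.
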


\begin{proof} Consider any two coordinates $i, j$ of a Kerdock code of
length $n$. Proving that  there are just two $i$-components in
$K^{*}_j$ is equivalent to showing that the minimum distance graph
of the doubly punctured Kerdock code $K^{**}_{ij}$ has two
connected components (which are actually even and odd weight
codewords). Recall \cite{A} that the minimum distance graph of a
code is the graph with vertex set being codewords and edgeset
being pairs of codewords at code distance.

The minimum distance of the code $K^{**}$ is even and equal to
$d-2$. The even weight codewords of $K^{**}_{ij}$ are obtained
from codewords of $K$ having 0 or 1 simultaneously in $i$th and
$j$th positions by puncturing in these positions and the odd
weight codewords of $K^{**}_{ij}$ are obtained from the codewords
of $K$ having both 0 and 1 in $i$th or $j$th positions by
puncturing in these positions. Moreover, the odd weight subcode
$K^{**}_{ij}$ is obtained as a translation of even weight subcode
$K^{**}_{ij}$. Indeed, let $x$ be in $RM(1,m)$, having 0 in $i$th
position and 1 in $j$th position (there is such vector in the code
$RM(1,m)$ since codewords of $RM(1,m)$ of weight $n/2$ form
3-design). Since $x$ is in $Ker(K)$, the addition of even weight
codewords of $K^{**}_{ij}$ with the codeword $x^{**}$ obtained
from $x$ by puncturing in $i$th and $j$th position is the odd
weight subcode of $K^{**}_{ij}$.

In view of the  above, it is enough to show the connectedness of
the minimum distance graph of the even weight subcode of $K^{**}$,
whose codewords have weights from
$\{0,d-2,d,n/2-2,n/2,n-d-2,n-d,n-2\}$. The proof significatively
relies on the fact that the restriction of the Hamming scheme to
${\overline {K^{\prime\prime}}}$ is an association scheme which
follows from Theorem \ref{ShKass} and Lemma \ref{lemma5}. We show
that certain intersection numbers of the restriction of the
Hamming scheme to ${\overline {K^{\prime\prime}}}$  are nonzeros.

 \begin{lemma} The following equalities hold:

\begin{equation}\label{eqL1} \delta_{d-2,n/2}^{d-2}({\overline
{K^{\prime\prime}}})=\frac{n^2-6n-2nd+8d}{4(n-2d)}.\end{equation}
 \end{lemma}

 \begin{equation}\label{eqL2} \delta_{d-2,n/2}^{n-d-2}({\overline
{K^{\prime\prime}}})=\frac{n^2-2nd+2n}{4(n-2d)}.
 \end{equation}
\begin{proof} By equality (\ref{eqal}), we know that
$\delta_{n-d,n/2}^{n-k-2}(K^{\prime\prime})=\delta_{d-2,n/2}^{k}({\overline
{K^{\prime\prime}}})$ for $k=d,n-d$.
It is easy to see that the nonzero codewords of the code
$RM(1,m)^{\prime\prime}$ form $1-(n-2,n/2,n/4)$-design, since
there are exactly $2n-2$ nonzero codewords of $RM(1,m)$ of weight
$n/2$ which form 3-design.  From (K3) we have that
$\delta_{n-d,n/2}^{n-d}(K^{\prime\prime})+\delta_{n-d,n/2}^{d}(K^{\prime\prime})$
is the number of nonzero codewords of $RM(1,m)^{\prime\prime}$, so
it is $n/2-1$. Therefore, we obtain the following equality from
Lemma \ref{magic}:

$$\delta_{n-d,n/2}^{n-d}(K^{\prime\prime})\frac{n}{4}+(n/2-1-\delta_{n-d,n/2}^{n-d}(K^{\prime\prime}))(\frac{3n}{4}-d)=\frac{n}{4}(n-d).$$
and we find that

\medskip

\noindent
$\delta_{n-d,n/2}^{n-d}(K^{\prime\prime})=\frac{n^2-6n-2nd+8d}{4(n-2d)}$,
\,
$\delta_{n-d,n/2}^{d}(K^{\prime\prime})=\frac{n^2-2nd+2n}{4(n-2d)}.$
\end{proof}

From the values given by (\ref{eqL1}) and (\ref{eqL2}) we see that
$\delta_{d-2,n/2}^{d-2}({\overline {K^{\prime\prime}}})$ and
$\delta_{d-2,n/2}^{n-d-2}({\overline {K^{\prime\prime}}})$ are
nonzeros, which is equivalent to

\begin{equation}\label{eq2}
\delta_{d-2,n/2}^{d-2}({\overline {K^{\prime\prime}}}) \neq 0,
\,\,  \delta_{n/2,n-d-2}^{d-2}({\overline {K^{\prime\prime}}})\neq
0.\end{equation}

Consider the codewords of ${\overline {K^{\prime\prime}}}_{d-2}$.
Obviously, the codewords cannot be at distance $n/2$ pairwise
apart, which follows, for example, from the Plotkin bound.
Therefore there are codewords of weight $d-2$ at distance $d$
apart and $\delta_{d-2,d-2}^d({\overline {K^{\prime\prime}}})\neq
0$, which is equivalent to
\begin{equation}\label{eq1}
\delta_{d-2,d}^{d-2}({\overline {K^{\prime\prime}}})\neq 0.
\end{equation}

From (\ref{eq2}) we see that any codeword of ${\overline
{K^{\prime\prime}}}_{n/2}$ is at distance $d-2$ from at least one
codeword of $K_{d-2}$ and a codeword of ${\overline
{K^{\prime\prime}}}_{n-d-2}$ is at distance $d-2$ from at least
one codeword of ${\overline {K^{\prime\prime}}}_{n/2}$. Therefore,
${\overline {K^{\prime\prime}}}_{d-2}$, ${\overline
{K^{\prime\prime}}}_{n/2}$, ${\overline
{K^{\prime\prime}}}_{n-d-2}$ are in one connected component of the
minimum distance graph of ${\overline {K^{\prime\prime}}}$.
 Taking
into account the equality (\ref{eqal}) this fact is equivalent to
the fact that the codewords of ${\overline
{K^{\prime\prime}}}_{n-d}$, ${\overline
{K^{\prime\prime}}}_{n/2-2}$ and ${\overline
{K^{\prime\prime}}}_{d}$ belong to one component. Finally, the
inequality (\ref{eq1}) implies that ${\overline
{K^{\prime\prime}}}_{d-2}$ and ${\overline
{K^{\prime\prime}}}_{d}$ are in one component, which implies that
the codewords of weights $\{0,d-2,d,n/2-2,n/2,n-d-2,n-d,n-2\}$ are
in one connected component, which is exactly the minimum distance
graph of ${\overline {K^{\prime\prime}}}$.

\end{proof}

{\bf Remark 1}. Theorems \ref{ShKass} and \ref{Comp} are true for
some  other Kerdock-related codes. In particular, by
considerations similar to those in proof of Theorem \ref{ShKass}
one can show that a Kerdock and a shortened Kerdock codes produce
association schemes, which gives an alternative (combinatorial)
proof for the well-known facts from \cite{Del} and \cite{SolTok}.
Analogously to the proof of Theorem \ref{Comp}, one can prove that
the $i$-components of a Kerdock code coincide with the Kerdock
code or equivalently, the minimum distance graph of a punctured
Kerdock code is connected.

\medskip

{\bf Remark 2}. According to Theorem \ref{Comp}, new Kerdock codes
cannot be constructed by means of traditional switchings. For
convenience we set $i=n-1$.
 By the proof
Theorem \ref{Comp} we know that two codewords are in one
$(n-1)$-component of the punctured Kerdock code $K^{*}_n$ if and
only if their puncturings in $(n-1)$th coordinate position have
weights of the same parity. Therefore, the codewords of the
Kerdock code $K$ could be represented as $K^{00}$, $K^{11}$,
$K^{01}$, $K^{10}$, where $K^{ab}=\{x\in K: x_{n-1}=a, x_n=b\}$,
with $K^{00}\cup K^{11}$ corresponding to one $(n-1)$-component of
$K^{*}_n$ and $K^{01}\cup K^{10}$ to the other one. Moreover, the
"odd weight" component is the translation of the "even weight"
one, i.e. there is a codeword $(x'01)$ of $RM(1,m)$ such that
$(K^{01}\cup K^{10})+(x'01)=K^{00}\cup K^{11}$. Now the switching
$K=K^{00}\cup K^{11}\cup ((x'01)+(K^{00}\cup K^{11}))$ to
$K'=K^{00}\cup K^{11}\cup ((x'10)+(K^{00}\cup K^{11}))$ gives an
equivalent code which is obtained from $K$ by permuting $(n-1)$th
and $n$th coordinate positions.

\section{Components of codes dual to BCH codes}

 In the section we fix $n=2^m$, $m$ odd. We investigate the $i$-components of the dual code $C_{1,3}^{\perp}$ of
a primitive cyclic BCH code  $C_{1,3}$ with zeros $\alpha$ and
$\alpha ^3$ with designed distance 5 by $i$-components, of length
$n-1=2^m-1$, $m$ odd, here $\alpha $ is a primitive element of the
Galois field $GF(2^m)$. The code shares many similar properties
with a Kerdock code. We prove that $C_{1,3}^{\perp}$ is an
$i$-component for any coordinate position $i$.

Further we use the following properties of the code
$C^{\perp}_{1,3}$.

(B1) \cite{MWSl} The minimum distance of the code
$C^{\perp}_{1,3}$ is $d=\frac{n-\sqrt{2n}}{2}$. The code
$C^{\perp}_{1,3}$ has the following weight distribution:

\bigskip

\begin{tabular}{|c|c|}
  \hline
  i & The number of codewords of weight i \\
  \hline
  0 & 1 \\
  d & $(n-1)(\frac{n}{4}+\sqrt{\frac{n}{8}})$ \\
  $\frac{n}{2}$ & $(n-1)(\frac{n}{2}+1)$ \\
  n-d & $(n-1)(\frac{n}{4}-\sqrt{\frac{n}{8}})$ \\
  \hline
\end{tabular}

\bigskip

The  fact below follows from Theorem \ref{TMW} and (B1).

 (B2) Fixed weight codewords of $C^{\perp}_{1,3}$ form a
2-design.

The code $C_{1,3}$ is uniformly packed \cite{BZZ}. In \cite{Del},
Theorem 6.10 it was shown that any code that is dual to a linear
uniformly packed code gives an association scheme.

 (B3)\cite{Del} The restriction of the Hamming scheme to $C^{\perp}_{1,3}$ is an association scheme.

\begin{lemma} \label{BCH1}
Let $C$ be the punctured (in any coordinate position) code of the
code $C^{\perp}_{1,3}$. Then any codeword of weight $d$ is at
distance $d-1$ from at least one codeword of weight $d-1$.
\end{lemma}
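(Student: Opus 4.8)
The plan is to lift the statement to the unpunctured code $C^{\perp}_{1,3}$ and reduce it to a purely combinatorial incidence claim. Fix the deleted coordinate $p$ and a weight-$d$ codeword $x$ of $C$. Since the only weights of $C^{\perp}_{1,3}$ are $0,d,n/2,n-d$ by (B1), there is no weight-$(d+1)$ codeword, so $x$ is the puncturing of a weight-$d$ codeword $\tilde{x}$ of $C^{\perp}_{1,3}$ with $\tilde{x}_p=0$; likewise every weight-$(d-1)$ codeword of $C$ is the puncturing of a weight-$d$ codeword $\tilde{y}$ of $C^{\perp}_{1,3}$ with $\tilde{y}_p=1$. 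Because $\tilde{x}_p\neq\tilde{y}_p$, one has $d_C(x,y)=d(\tilde{x},\tilde{y})-1$, and since $d$ is the minimum distance of $C^{\perp}_{1,3}$ the value $d-1$ is precisely the minimum distance of $C$. Thus the desired neighbour exists if and only if there is a weight-$d$ codeword $\tilde{y}$ of $C^{\perp}_{1,3}$ with $\tilde{y}_p=1$ and $d(\tilde{x},\tilde{y})=d$.

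First I would show that two weight-$d$ codewords at distance exactly $d$ exist at all, i.e.\ that the intersection number $\delta^{d}_{d,d}$ of the association scheme (B3) is positive. This already follows by a rough count: the weight-$d$ codewords number $N_d=(n-1)(n/4+\sqrt{n/8})$, which is of order $n^2/4$, whereas $n-d=(n+\sqrt{2n})/2>n/2>(n-1)/2$; if no two of them were at distance $d$, all pairwise distances among them would be at least $n/2$ and the Plotkin bound would cap their number at $O(n)$, a contradiction. More precisely, applying Lemma~\ref{magic} to the $1$-design of all weight-$d$ codewords with $x=\tilde{x}$, and adjoining the total count $1+a+b+c=N_d$ together with the second moment supplied by the $2$-design property (B2), yields three linear relations that determine the multiplicities $a=\delta^{d}_{d,d}$, $b=\delta^{n/2}_{d,d}$, $c=\delta^{n-d}_{d,d}$ exactly and confirm $a>0$. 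The arithmetic here is routine once (B1) and (B2) are substituted, since $d=2^{m-1}-2^{(m-1)/2}$ is even and all the design parameters are integers.

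The main obstacle is the passage from the existence of a close pair \emph{somewhere} to a close pair that uses the prescribed coordinate $p$. Write $S=\supp(\tilde{x})$, so $p\notin S$. Every weight-$d$ codeword $\tilde{y}$ with $d(\tilde{x},\tilde{y})=d$ satisfies $|S\cap\supp(\tilde{y})|=d/2$, and hence has exactly $d/2$ of its ones among the $n-1-d$ coordinates outside $S$; summing over the $a$ such codewords produces $a\,d/2$ incidences distributed over the complement of $S$. If the number $m(q)$ of these codewords having a one in a fixed coordinate $q\notin S$ is the same for every $q\notin S$, then $m(p)=a(d/2)/(n-1-d)>0$ and the lemma follows. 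I would establish this coordinate-regularity from the automorphism group of $C^{\perp}_{1,3}$, which is coordinate-transitive (cyclic and Frobenius-invariant), reducing to the claim that the stabilizer of $\tilde{x}$ acts transitively enough on the coordinates outside $S$ to make $m(q)$ constant there. This is the delicate point: the $2$-design property by itself supplies only two moment relations for the weight-$d$ codewords through $p$ (total number, and first moment against $S$), which is one relation short of forcing $m(p)>0$, so one genuinely needs the extra regularity coming from the scheme and its symmetries rather than bare design counting.
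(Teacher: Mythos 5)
Your reduction to the unpunctured code and your global counting (that $\delta^{d}_{d,d}>0$ somewhere) are fine, but the proof breaks down exactly at the point you flag, and the proposed repair does not work. Coordinate-transitivity of the automorphism group of $C^{\perp}_{1,3}$ says nothing about the action of the \emph{stabilizer} of the fixed codeword $\tilde{x}$ on the coordinates outside $S=\supp(\tilde{x})$: the known group (generated by the field multiplications and the Frobenius map) has order only $m(2^m-1)$, so the stabilizer of a single codeword is tiny and cannot act transitively on the roughly $n/2$ coordinates of the complement of $S$. Nothing forces the weight-$d$ codewords at distance $d$ from $\tilde{x}$ to distribute their ones uniformly outside $S$, so the step $m(p)=a(d/2)/(n-1-d)>0$ is unjustified; as it stands your argument produces a close pair \emph{somewhere} but not through the prescribed coordinate $p$.

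Moreover, your diagnosis that design counting for the codewords through $p$ is ``one relation short'' is incorrect, and seeing why closes the gap — this is precisely the paper's proof. Argue by contradiction: suppose no weight-$d$ codeword $\tilde{y}$ with $\tilde{y}_p=1$ is at distance $d$ from $\tilde{x}$, i.e.\ $d(x,C_{d-1})>d-1$ in the punctured code, where $C_{d-1}$ is the set of weight-$(d-1)$ codewords (the puncturings at $p$ of the weight-$d$ codewords through $p$). By (B2) the set $C_{d-1}$ is a $1$-design of length $n-2$, and under the assumption every distance from $x$ to $C_{d-1}$ lies in $\{n/2-1,\,n-d-1\}$ — only \emph{two} values, not three. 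Now the two relations you already identified (the total count $|C_{d-1}|=\lambda_1\frac{n-2}{d-1}$ and the first-moment identity of Lemma~\ref{magic}) determine both multiplicities uniquely, and solving gives $\delta^{\frac{n}{2}-1}/|C_{d-1}|>1$, which is absurd since a multiplicity cannot exceed the size of the design. In other words, assuming $m(p)=0$ removes one unknown, turning your two moment relations into an overdetermined, infeasible system; no third moment, no scheme regularity, and no symmetry of the stabilizer is needed. You had all the ingredients — the derived $1$-design through $p$ and Lemma~\ref{magic} — but missed this contradiction step and substituted an unprovable regularity claim for it.
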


\begin{proof} Let $C_{d-1}$ be the set of codewords of the punctured code of $C^{\perp}_{1,3}$ of weight $d-1$. Suppose that $x$ is a codeword of weight $d$ such
that $d(x,C_{d-1})>d-1$. Then $d(x,C_{d-1}) \in \{\frac{n}{2}-1,
n-d-1\}$. Since the vectors of $C_{d-1}$ form 1-design which
follows from the property (B2), we can use Lemma \ref{magic} to
count the number $\delta^{\frac{n}{2}-1}$ of the codewords of
$C_{d-1}$ at distance $\frac{n}{2}-1$ from $x$:
$$\delta^{\frac{n}{2}-1} (d-\frac{n}{4}) + (|C_{d-1}| -
\delta^{\frac{n}{2}-1})\frac{3d-n}{2}=\lambda_1\cdot d,$$ where
$|C_{d-1}|=\lambda_1\frac{n-2}{d-1}$.

It is easy to see that
$$\frac{\delta^{\frac{n}{2}-1}}{
|C_{d-1}|} =\frac{2(n^2-2n+8d-3nd-2)}{(n-2)(n-2d)}>1,$$ a
contradiction.
\end{proof}

\begin{lemma} \label{BCH2}
The minimum weight codewords of $C^{\perp}_{1,3}$ span the code.
\end{lemma}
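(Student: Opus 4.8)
The plan is to exploit the cyclic structure of $C^{\perp}_{1,3}$ together with the observation that the linear span of the minimum weight codewords is automatically a cyclic subcode. First I would recall the trace description of the dual BCH code: since $C_{1,3}$ is the cyclic code with zeros $\alpha,\alpha^3$ (and their conjugates), its dual consists of the words $(\mathrm{Tr}(ax+bx^3))_{x\in GF(2^m)^*}$ with $a,b\in GF(2^m)$, where $\mathrm{Tr}$ is the trace from $GF(2^m)$ to $GF(2)$. As $m$ is odd, $\gcd(3,2^m-1)=1$, so $x\mapsto x^3$ is a permutation of $GF(2^m)^*$, and $C^{\perp}_{1,3}$ decomposes as a direct sum $\mathcal I_1\oplus\mathcal I_3$ of the two irreducible cyclic codes $\mathcal I_1=\{(\mathrm{Tr}(ax))_x\}$ and $\mathcal I_3=\{(\mathrm{Tr}(bx^3))_x\}$, each of dimension $m$. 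Both summands have constant weight: every nonzero word of the simplex code $\mathcal I_1$ has weight $2^{m-1}=n/2$, and the same holds for $\mathcal I_3$ because the cube permutation carries $(\mathrm{Tr}(bx^3))_x$ to the simplex word $(\mathrm{Tr}(by))_y$ of equal weight.

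Next I would use the semisimplicity of the ambient ring. Since $n-1=2^m-1$ is odd, $GF(2)[x]/(x^{n-1}-1)$ is semisimple, and $\mathcal I_1,\mathcal I_3$ are distinct minimal ideals, corresponding to the two different cyclotomic cosets of $1$ and $3$. Hence the only ideals contained in $\mathcal I_1\oplus\mathcal I_3$, equivalently the only cyclic subcodes of $C^{\perp}_{1,3}$, are $\{0\}$, $\mathcal I_1$, $\mathcal I_3$ and $C^{\perp}_{1,3}$ itself. This is the structural heart of the argument.

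Finally I would combine the two facts. The set of minimum weight codewords is invariant under the cyclic shift, so its span $S$ is a cyclic subcode and must be one of the four codes above. By (B1), the minimum distance is $d=\frac{n-\sqrt{2n}}{2}=2^{m-1}-2^{(m-1)/2}$, which is strictly smaller than $2^{m-1}$; thus a minimum weight codeword is nonzero and has weight $\neq 2^{m-1}$, so it lies in neither $\mathcal I_1$ nor $\mathcal I_3$, whose nonzero words all have weight exactly $2^{m-1}$. Consequently $S$ is none of $\{0\},\mathcal I_1,\mathcal I_3$, forcing $S=C^{\perp}_{1,3}$, which is the assertion.

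I expect the main obstacle to be the bookkeeping that pins down the decomposition $C^{\perp}_{1,3}=\mathcal I_1\oplus\mathcal I_3$ and the constant-weight property of $\mathcal I_3$: one must verify that $1$ and $3$ lie in distinct cyclotomic cosets (so that the two minimal ideals are genuinely distinct, yielding exactly four cyclic subcodes) and that the cube map is a weight-preserving bijection, which is precisely where the hypothesis that $m$ is odd enters. Once these are established, the remaining weight comparison $d<n/2$ is immediate from the weight distribution in (B1), and the conclusion follows.
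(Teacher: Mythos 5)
Your proof is correct, but it takes a genuinely different route from the paper's. The paper works combinatorially: starting from the same decomposition $C^{\perp}_{1,3}=C^{\perp}_{1}\oplus C^{\perp}_{3}$ into two constant-weight (simplex) summands, it exhibits a triple $x,x'\in C^{\perp}_{1}$, $y\in C^{\perp}_{3}$ with $d(x,x')=n/2$ and $d(x,y)=d(x',y)=d$ (possible because there are more than $n$ words of weight $d$), then invokes property (B3) --- that the restriction of the Hamming scheme to $C^{\perp}_{1,3}$ is an association scheme --- to conclude $\delta^{d}_{d,n/2}\neq 0$, so every weight-$n/2$ codeword is a sum of two weight-$d$ codewords; a comparison of the numbers of codewords of weights $n-d$ and $d$ then handles the remaining weight class. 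You instead exploit cyclicity: the span $S$ of the minimum weight codewords is shift-invariant, hence an ideal of the semisimple ring $GF(2)[x]/(x^{2^m-1}-1)$ contained in $\mathcal{I}_1\oplus\mathcal{I}_3$, hence one of $\{0\}$, $\mathcal{I}_1$, $\mathcal{I}_3$, or the whole code; since $d=2^{m-1}-2^{(m-1)/2}<2^{m-1}$ while every nonzero word of $\mathcal{I}_1$ and $\mathcal{I}_3$ has weight exactly $2^{m-1}$, only $S=C^{\perp}_{1,3}$ survives. All your ingredients check out: the trace representation of the dual of $C_{1,3}$ is standard, $1$ and $3$ lie in distinct cyclotomic cosets of size $m$, and $\gcd(3,2^m-1)=1$ for odd $m$ makes the cube map a weight-preserving coordinate permutation, so $\mathcal{I}_3$ is indeed constant-weight. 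Your argument is arguably cleaner and more self-contained --- it needs neither Delsarte's association-scheme property (B3) nor the existence of the triple $x,x',y$, which the paper asserts rather sketchily. What the paper's proof buys in exchange is generality: it uses only properties (B1)--(B3) and therefore transfers verbatim to the dual of \emph{any} linear uniformly packed code with these parameters (as claimed in the paper's Corollary, covering codes from AB functions), which need not be cyclic; your ideal-theoretic argument is tied to the cyclic structure, though it adapts directly to the Gold-function codes $C^{\perp}_{1,2^j+1}$.
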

\begin{proof}
The code $C^{\perp}_{1,3}$ is the direct sum of the Hadamard codes
$C^{\perp}_{1}$ and $C^{\perp}_{3}$, both of which consist of
$n-1$ nonzero codewords having weight $n/2$. The number of
codewords of weight $d$ in $C^{\perp}_{1,3}$ is greater then $n$
(see (B1)). Therefore one can find three codewords in codes
$C^{\perp}_{1}$ and $C^{\perp}_{3}$ with distances $d$ or $n/2$
pairwise, e.g. $x,x^{\prime}\in C^{\perp}_{1}$ and $y\in
C^{\perp}_{3}$, such that $d(x,x^{\prime})= n/2$ and
$d(x,y)=d(x^{\prime},y)=d$. Hence, by property (B3), we have that
the intersection number  $\delta_{d,n/2}^d(C^{\perp}_{1,3})$ is
nonzero, i.e. any codeword of weight $n/2$ is at distance $d$ from
at least one codeword of weight $d$ in $C^{\perp}_{1,3}$.

The number of codewords of weight $n-d$ is less than the number of
codewords of weight $d$, therefore any codeword of weight $n-d$ is
at distance $d$ from at least one codeword of weight $n/2$ or $d$.
So, the codewords of weight $d$ generate the code
$C^{\perp}_{1,3}$.
\end{proof}

\begin{theorem}\label{theodualBCH}
A code $C^{\perp}_{1,3}$ of length $n=2^m-1$, $m$ odd, consists of
one $i$-component for any coordinate position $i$.
\end{theorem}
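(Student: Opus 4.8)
The plan is to reduce the statement to a spanning property of a punctured code and then feed Lemmas \ref{BCH1} and \ref{BCH2} into it. Fix a coordinate position $i$ and let $D=(C^{\perp}_{1,3})^{*}$ be the code obtained from $C^{\perp}_{1,3}$ by puncturing in position $i$. Since $C^{\perp}_{1,3}$ has minimum distance $d\geq 2$ it has no codewords of weight $1$, so puncturing is injective and the map $\pi\colon C^{\perp}_{1,3}\to D$ deleting coordinate $i$ is a linear isomorphism. First I would show that the graph $G_i(C^{\perp}_{1,3})$ is isomorphic, via $\pi$, to the minimum distance graph of $D$. Indeed, if $x,y\in C^{\perp}_{1,3}$ satisfy $d(x,y)=d$ and $x_i\neq y_i$, then $d(\pi(x),\pi(y))=d-1$; conversely a pair at distance $d-1$ in $D$ lifts to a pair $x,y$ with $d(x,y)\in\{d-1,d\}$, and the value $d-1$ is impossible because $C^{\perp}_{1,3}$ has minimum distance $d$, forcing $d(x,y)=d$ and $x_i\neq y_i$. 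Hence $D$ has minimum distance $d-1$ and the two graphs have the same edge set, so $C^{\perp}_{1,3}$ is a single $i$-component if and only if the minimum distance graph of $D$ is connected.

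The second step exploits the linearity of $D$. In the minimum distance graph of a linear code, two codewords are adjacent exactly when their sum is a codeword of minimum weight, so the connected component containing $\zero$ is precisely the linear span of the minimum weight codewords. Thus the minimum distance graph of $D$ is connected if and only if the weight $d-1$ codewords of $D$ span $D$. (Such codewords exist, since by (B2) the weight $d$ codewords of $C^{\perp}_{1,3}$ form a $1$-design and hence cover position $i$.) This is the crux of the argument: after puncturing, the minimum weight drops from $d$ to $d-1$, and it is the \emph{new} minimum weight codewords that must be shown to generate.

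Finally I would close the gap between weight $d-1$ and weight $d$ codewords of $D$. Every weight $d-1$ codeword of $D$ is the image under $\pi$ of a weight $d$ codeword of $C^{\perp}_{1,3}$, since $C^{\perp}_{1,3}$ has no weight $d-1$ codewords, and every weight $d$ codeword of $D$ is likewise such an image; therefore, by Lemma \ref{BCH2} together with the isomorphism $\pi$, the weight $d-1$ and weight $d$ codewords of $D$ jointly span $D$. On the other hand, Lemma \ref{BCH1} asserts that each weight $d$ codeword $u$ of $D$ lies at distance $d-1$ from some weight $d-1$ codeword $v$, so $u+v$ is again a weight $d-1$ codeword and $u=v+(u+v)$ is a sum of two weight $d-1$ codewords. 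Hence every weight $d$ codeword of $D$ already lies in the span of the weight $d-1$ codewords, and combining this with the joint spanning property yields that the weight $d-1$ codewords alone span $D$. By the second step the minimum distance graph of $D$ is connected, and by the first step $C^{\perp}_{1,3}$ consists of a single $i$-component, as required. The main obstacle is exactly this transition of the minimum weight under puncturing: the spanning result at hand (Lemma \ref{BCH2}) concerns the weight $d$ codewords of the original code, whereas connectivity demands spanning by the weight $d-1$ codewords of the punctured code, and it is precisely Lemma \ref{BCH1} that bridges the discrepancy.
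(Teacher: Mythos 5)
Your proposal is correct and takes essentially the same route as the paper: both arguments combine Lemma \ref{BCH2} (the weight-$d$ codewords span $C^{\perp}_{1,3}$) with Lemma \ref{BCH1} (which absorbs the remaining minimum-weight codewords into the span of those meeting the chosen coordinate), yielding connectivity of the relevant graph. The only difference is presentational — you work in the punctured code $D$ and spell out the standard reductions (the isomorphism between $G_i(C^{\perp}_{1,3})$ and the minimum distance graph of $D$, and the identification of the component of $\zero$ with the span of minimum weight codewords) that the paper's two-sentence proof leaves implicit, arguing instead directly in $C^{\perp}_{1,3}$ with the weight-$d$ codewords having $1$ in position $i$.
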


\begin{proof}

By Lemma \ref{BCH1} any codeword of $C^{\perp}_{1,3}$ of weight
$d$ with $0$ in the
 $i$th coordinate position is at distance $d$ from a codeword of
weight $d$ with $1$ in the $i$th coordinate position. By Lemma
\ref{BCH2}, this implies that the set of all codewords of weight
$d$ having $1$ in the $i$th coordinate position generates the code
$C^{\perp}_{1,3}$, i.e. the code $C^{\perp}_{1,3}$ is an
$i$-component for any $i\in \{1,2,\ldots,n-1\}.$

\end{proof}




 Note that the properties (B1)-(B3) and the proof
of Theorem \ref{theodualBCH} are the same for any code that is
dual to a linear uniformly packed code with the same parameters as
the BCH code. In particular, the cyclic code $C_{1,2^j
+1}^{\perp}$, $(j,m)=1$ corresponding to the Gold function,
 $n-1=2^m-1$, $m$ odd as well as the duals of other linear codes obtained from almost bent functions (AB-functions) are uniformly packed \cite{CCZ}
 and therefore each of them is an $i$-component for any $i$.

\begin{corollary}\label{theodualBCH}
The dual of a linear uniformly packed code with parameters of BCH
code $C_{1,3}$ of length $n-1=2^m$, $m$-odd
 is an $i$-component for any coordinate position $i$.

\end{corollary}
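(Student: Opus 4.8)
The plan is to observe that nothing in the proof of Theorem \ref{theodualBCH} uses any specific feature of $C^{\perp}_{1,3}$ beyond the three structural facts (B1)--(B3), and that each of these is determined by the code only through the parameters of the underlying linear uniformly packed code. Hence, for an arbitrary code $C$ that is dual to a linear uniformly packed code sharing the parameters of $C_{1,3}$, it suffices to re-establish (B1)--(B3) for $C$ and then invoke the earlier argument essentially verbatim. The whole corollary is thus a matter of checking parameter-invariance of the hypotheses rather than of producing a new combinatorial idea.

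First I would verify (B1) for $C$. The weight enumerator of the dual of a linear uniformly packed code is a function of the length together with the packing data (the packing radius and the constants occurring in the uniform-packing condition): these data fix the outer distribution of the uniformly packed code, and through the MacWilliams/Delsarte transform they fix the weight distribution of the dual. Consequently a code $C$ with the same parameters as $C_{1,3}$ has the same minimum distance $d=\frac{n-\sqrt{2n}}{2}$ and exactly the multiplicities of the weights $d$, $n/2$, $n-d$ listed in the table of (B1). Next, (B2) follows by applying Theorem \ref{TMW} to $C$: since the minimum distance $d$ and the number $\bar s$ of nontrivial weights are unchanged, the fixed-weight codewords of $C$ again form $2$-designs. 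Finally, (B3) holds because $C$ is by hypothesis dual to a linear uniformly packed code, so Delsarte's Theorem 6.10 \cite{Del} applies and the restriction of the Hamming scheme to $C$ is an association scheme.

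With (B1)--(B3) secured for $C$, Lemmas \ref{BCH1} and \ref{BCH2} carry over without modification. The computation in Lemma \ref{BCH1} depends only on $n$, $d$ and the design parameter $\lambda_1$ (all of which are now the same), so by Lemma \ref{magic} the ratio $\delta^{n/2-1}/|C_{d-1}|$ still exceeds $1$ and forces every weight-$d$ codeword of the punctured code to lie at distance $d-1$ from some weight-$(d-1)$ codeword. Likewise the spanning argument of Lemma \ref{BCH2} uses only the weight counts of (B1) and the nonvanishing of the intersection number $\delta_{d,n/2}^{d}$ supplied by (B3), so the minimum-weight codewords again generate $C$. Running the proof of Theorem \ref{theodualBCH} then shows that a weight-$d$ codeword with $0$ in position $i$ sits at distance $d$ from a weight-$d$ codeword with $1$ in position $i$, whence the weight-$d$ codewords with $1$ in position $i$ generate $C$ and $C$ is an $i$-component for every $i$.

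The main obstacle is the precise justification of the (B1) step: one must argue carefully that ``the same parameters as the BCH code $C_{1,3}$'' pins down the full weight distribution of the dual, and not merely, say, the length and the minimum distance. The cleanest route is to fix explicitly which invariants are meant by the parameters of a uniformly packed code---length, packing radius and the uniform-packing constants---and then to invoke the standard fact that for such a code these determine the outer distribution, from which the dual weight enumerator is determined. Once this is settled the remainder is routine bookkeeping, since every subsequent quantity entering Lemmas \ref{BCH1}, \ref{BCH2} and the proof of Theorem \ref{theodualBCH} is a function of $n$, $d$ and the fixed design and scheme parameters.
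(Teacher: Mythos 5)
Your proposal is correct and follows essentially the same route as the paper, whose entire justification of the corollary is the remark preceding it: properties (B1)--(B3) and the proof of Theorem \ref{theodualBCH} depend only on the parameters of the underlying linear uniformly packed code, so they carry over verbatim. Your additional care in justifying the transfer of (B1) --- that the packing constants fix the outer distribution and hence, via the MacWilliams/Delsarte transform, the dual weight enumerator --- is a welcome explicitness the paper leaves implicit, but it does not change the argument.
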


{\bf Conclusion.} We considered duals of two such well-known
classes of uniformly packed codes as Preparata and 2-error
correcting BCH code. The dual codes have large minimum distance,
few nonzero weights and are related to designs and association
schemes. We proved that $i$-components of these codes are maximum.
It would be natural to study the structure of $i$-components of
Preparata codes that are formal duals of Kerdock codes. For $n=15$
these classes meet in the self-dual Nordstrom-Robinson code that
has two $i$-components for any coordinate position $i$. With the
help of a computer, we showed that $C_{1,3}^{\perp}$ of length
$2^m-1$ is an $i$-component for any $i$ for even $m$ also for
$m=6, 8, 10$ and the BCH code $C_{1,3}$ consists of two
$i$-components for any coordinate position $i$ for any $m$: $5\leq
m\leq 8$. Another challenging problem is finding $i$-components of
the BCH codes $C_{1,3}$ for any $m$ and their duals for even $m$.

\end{document}